\newcommand{\1}{\textbf{1}}
\newcommand{\call}[4]{\int_{#1}^{#2} {#3} \; \textrm{d} {#4}}
\newcommand{\mb}[1]{\mathbb{#1}}
\newcommand{\norma}[2]{\left\Vert #1 \right\Vert_{#2}}
\newcommand{\e}{\varepsilon}
\newcommand{\cube}{\{-\gamma,\gamma^{-1}\}^n}
\newcommand{\skal}[2]{\left\langle #1, #2 \right\rangle}
\newcommand{\kostka}{\{-1,1 \}^n }
\newcommand{\p}[1]{\mb{P}\left( #1  \right)}
\newcommand{\aff}{\mathcal{A}}
\newcommand{\affb}{\mathcal{A}_{[-1,1]}}
\DeclareMathOperator{\sgn}{\textrm{sgn}}
\DeclareMathOperator{\dist}{dist}
\newtheorem{thm}{Theorem}
\newtheorem{lem}{Lemma}
\theoremstyle{definition}
\newtheorem*{def*}{Definition}
\newtheorem*{question*}{Question}
\theoremstyle{remark}
\newtheorem*{rem*}{Remark}
\title{FKN Theorem on the biased cube}
\author{Piotr Nayar \thanks{Research partially supported by NCN Grant no. 2011/01/N/ST1/01839.}  }
\date{}
\begin{document}

\maketitle

\begin{abstract}
In this note we consider Boolean functions defined on the discrete cube $\{-\gamma,\gamma^{-1}\}^n$ equipped with a product probability measure $\mu^{\otimes n}$, where $\mu=\beta \delta_{-\gamma }+\alpha \delta_{ \gamma^{-1} }$ and $\gamma=\sqrt{\alpha/ \beta}$. We prove that if the spectrum of such a function is concentrated on the first two Fourier levels, then the function is close to a certain function of one variable. 

Moreover, in the symmetric case $\alpha=\beta=\frac12$ we prove that if a $[-1,1]$-valued function defined on the discrete cube is close to a certain affine function, then it is also close to a $[-1,1]$-valued affine function.
\end{abstract} 

\noindent {\bf 2010 Mathematics Subject Classification.} Primary 42C10; Secondary 60E15.

\noindent {\bf Key words and phrases.} Boolean functions, Walsh-Fourier expansion, FKN Theorem

\section{Introduction and notation}\label{sec:intro}

Let $\alpha,\beta >0$ with $\alpha+\beta=1$ and $\alpha \in (0,\frac12)$. We consider the discrete cube $\{-\gamma,\gamma^{-1} \}^n$ equipped with the $L_2$ structure given by the product probability measure $\mu_n = \mu^{\otimes n}$, where $\mu=\beta \delta_{-\gamma }+\alpha \delta_{ \gamma^{-1} }$ and $\gamma=\sqrt{\alpha/ \beta}$. For $f,g:\cube \to \mb{R}$ the standard scalar product $\skal{f}{g} = \call{}{}{fg}{\mu_n}$ induces the norm $\norma{f}{}=\sqrt{\skal{f}{f}}$. We also define the $L_p$ norm, $\norma{f}{p}=\left(\call{}{}{|f|^p}{\mu_n}\right)^{1/p}$. Let $[n]=\{1,2\ldots,n\}$. For $T \subseteq [n]$ and $x=(x_1,\ldots,x_n)$ let $w_T(x) = \prod_{i \in T} x_i$ and $w_\emptyset \equiv 1$. Note that we have $\call{}{}{x_i}{\mu_n}=0$ and $\call{}{}{x_ix_j}{\mu_n}= \delta_{ij}$. It follows that $(w_T)_{T \subseteq [n]}$ is an orthonormal basis of $L_2(\cube,\mu_n)$. Therefore, every function $f:\cube \to \mb{R}$ admits the unique expansion $f=\sum_{T \subset [n]} a_T w_T$. The functions $w_T$ are sometimes called the Walsh-Fourier functions. If the function $f$ is $\{-1,1 \}$-valued then it is called Boolean.

The Fourier analysis of Boolean functions plays an important role in many areas of research, including learning theory, social choice, complexity theory and random graphs, see e.g. \cite{O1}  and \cite{O2}. One of the most important analytic tools in this theory is the so-called hypercontractive Bonami-Beckner-Gross inequality, see \cite{Bo}, \cite{Be}, \cite{G1} and \cite{G2} for a survey on this topic. This inequality has been used in the celebrated papers by J. Kahn, G. Kalai and N. Linial, \cite{KKL}, and E. Friedgut, \cite{F}. It can be stated as follows. Take $\alpha=\beta=\frac12$ and $q \in [1,2]$. Then we have      
\begin{equation}\label{hipsym}
	\norma{\sum_{T \subseteq [n]} (q-1)^{|T|/2}  a_T w_T  }{2} \leq \norma{\sum_{T \subseteq [n]}  a_T w_T  }{q}
\end{equation}
for every choice of $a_T \in \mb{R}$. This inequality has been generalized in \cite{Ol} to the non-symmetric case. Namely, the following inequality holds true,
\begin{equation}\label{hipbiased}
	\norma{\sum_{T \subseteq [n]} c_q(\alpha,\beta)^{|T|}  a_T w_T  }{2} \leq \norma{\sum_{T \subseteq [n]}  a_T w_T  }{q},
\end{equation}  
where
\[
	c_q(\alpha,\beta) = \sqrt{  \frac{ \beta^{2-\frac{2}{q}} - \alpha^{2-\frac{2}{q}}  }{ \alpha \beta \left( \alpha^{-\frac{2}{q}}  - \beta^{-\frac{2}{q}}  \right) }  }.
\]
One can easily check that \eqref{hipsym} is a special case of \eqref{hipbiased}, namely $\sqrt{q-1}=\lim_{\e \to 0} c_q(\frac12-\e,\frac12+\e)$. Moreover, it is easy to see that $c_q(\alpha,\beta)\in [0,1]$. 

In \cite{FKN} the authors proved the following theorem, which is now called the FKN Theorem. Suppose $\alpha=\beta=\frac12$ and we have a Boolean function $f$ whose Fourier spectrum is concentrated on the first two levels, say $\sum_{|T|>1} a_T^2 < \e^2$. Then $f$ is $C\e$-close in the $L_2$ norm to the constant function or to one of the functions $\pm x_i$. The authors gave two proofs of this theorem. One of them contained an omission which was fixed by G. Kindler and S. Safra in their unpublished paper, \cite{KS}, see also \cite{K}.  In \cite{JOW} the authors gave a proof of the following version of the FKN Theorem. 
\begin{thm}[\cite{JOW}, Theorem 5.3 and Theorem 5.8]\label{thm1}
Let $f=\sum_T a_T w_T$ be the Walsh-Fourier expansion of a function $f:\{-1, 1 \}^n \to \{-1,1 \}$ and let $\rho = \Big(\sum_{|T| >1}a_T^2 \Big)^{1/2} $. Then there exists $B \subseteq [n]$ with $|B|\leq 1$ such that
$
	\sum_{|T| \leq 1, T \ne B} a_T^2 \leq C \rho^4 \ln(2/ \rho)
$  
and $|a_B|^2 \geq 1-\rho^2-C \rho^4 \ln(2/ \rho)$, where $C$ is a universal constant.

Moreover, in the non-symmetric case,  $f:\cube \to \{-1,1 \}$, there exists $k \in [n]$ such that $\norma{f-(a_\emptyset+a_{\{k \}}w_{\{ k\}})}{}\leq 8 \sqrt{\rho}$.
\end{thm}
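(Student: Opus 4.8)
The plan is to peel off the low-degree part of $f$, use Booleanity to see that it is close to a $\{-1,1\}$-valued function, and then to show that an affine function which is almost $\{-1,1\}$-valued must have its first-level Fourier weight essentially supported on one coordinate. Write $g=a_\emptyset+L$ with $L=\sum_i a_{\{i\}}w_{\{i\}}$ for the projection of $f$ onto the first two levels, $h=f-g$, so that $\norma{h}{}^2=\sum_{|T|>1}a_T^2=\rho^2$; write $u^{=k}$ for the projection of a function $u$ onto Fourier level $k$. Since $f^2\equiv1$ we have $\sum_T a_T^2=1$, hence $a_\emptyset^2+\norma{L}{}^2=1-\rho^2$, and moreover $|a_{\{i\}}|=|\E[fw_{\{i\}}]|\le\E|x_i|=2\sqrt{\alpha\beta}$ for every $i$ — this a priori bound on the first-level coefficients is what will keep all constants independent of the bias.

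I would first record two consequences of Booleanity. Since $\sgn g\ne f$ forces $|f-g|\ge1$, we get $\p{\sgn g\ne f}\le\norma{f-g}{}^2=\rho^2$; combined with Cauchy--Schwarz and $\skal{f}{g}=\norma{g}{}^2=1-\rho^2$ this gives $\E|g|\ge\skal{f}{\sgn g}-2\rho\ge1-2\rho-\rho^2$, hence $\E[(1-|g|)^2]=2-\rho^2-2\E|g|\le5\rho$, i.e. $g$ lies within $\sqrt{5\rho}$ of the $\{-1,1\}$-valued function $\sgn g$. Next, on the biased cube $x_i^2=1+\lambda x_i$ with $\lambda=\gamma^{-1}-\gamma\ge0$, so
\[
L^2=\norma{L}{}^2+\lambda\sum_i a_{\{i\}}^2 w_{\{i\}}+\sum_{i<j}2a_{\{i\}}a_{\{j\}}w_{\{i,j\}},
\]
and projecting $1-g^2=2gh+h^2$ onto level two yields $(L^2)^{=2}=-(2a_\emptyset h+2Lh+h^2)^{=2}$. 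Pairing this with $(L^2)^{=2}$ and bounding the three terms on the right by Cauchy--Schwarz and Hölder — using that $|a_{\{i\}}|\le2\sqrt{\alpha\beta}$ together with $\E[x_i^{2p}]=\alpha^p/\beta^{p-1}+\beta^p/\alpha^{p-1}$ keeps $\norma{L}{2p},\norma{g}{2p},\norma{(L^2)^{=2}}{2p}$ bounded \emph{uniformly} in $(\alpha,\beta)$, and feeding in $\norma{h}{2}=\rho$ — one gets $\norma{(L^2)^{=2}}{}=O(\sqrt\rho)$, i.e. $\sum_{i<j}a_{\{i\}}^2 a_{\{j\}}^2=O(\rho)$. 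Choosing $k$ with $a_{\{k\}}^2$ maximal, $\bigl(\sum_{i\ne k}a_{\{i\}}^2\bigr)a_{\{k\}}^2\le2\sum_{i<j}a_{\{i\}}^2 a_{\{j\}}^2=O(\rho)$; so if $\norma{L}{}^2$ is bounded below by an absolute constant — in which case $\sum_i a_{\{i\}}^4\ge\norma{L}{}^4-O(\rho)$ forces one coordinate to carry almost all of the first-level mass, and hence $a_{\{k\}}^2\gtrsim1$ — we obtain $\sum_{i\ne k}a_{\{i\}}^2=O(\rho)$, while if $\norma{L}{}^2\le63\rho$ then $f$ is already within $\sqrt{64\rho}$ of the constant $a_\emptyset$ and any $k$ works. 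The conclusion then follows from $\norma{f-(a_\emptyset+a_{\{k\}}w_{\{k\}})}{}^2=\rho^2+\sum_{i\ne k}a_{\{i\}}^2\le64\rho=(8\sqrt\rho)^2$.

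The main obstacle is the intermediate range $63\rho<\norma{L}{}^2\ll1$, which is precisely where the non-symmetric setting bites: it coexists with extreme bias ($\alpha$ near $0$), for then each $|a_{\{i\}}|\le2\sqrt{\alpha\beta}$ is tiny, the first-level mass may genuinely be spread over many coordinates of comparable size, and $a_\emptyset^2\approx1$. Here the hypercontractive inequality \eqref{hipbiased} is useless — its constant $c_{4/3}(\alpha,\beta)=\sqrt{\sqrt{\alpha\beta}/(1+\sqrt{\alpha\beta})}$ degenerates as $\alpha\to0$ — and the quantity $2|a_\emptyset|-\lambda a_{\{i\}}$ governing the level-one relation $2a_\emptyset a_{\{i\}}+\lambda a_{\{i\}}^2=-\skal{h(f+g)}{w_{\{i\}}}$ need not stay bounded away from $0$, so purely algebraic manipulation of the Fourier coefficients no longer suffices. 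What should save the argument is anti-concentration of the affine function $g$: flipping one coordinate from $-\gamma$ to $\gamma^{-1}$ shifts $g$ by $a_{\{i\}}(\gamma^{-1}+\gamma)\in[-2,2]$, which in view of $\E[(1-|g|)^2]\le5\rho$ is incompatible with $g$ being close to $\{-1,1\}$ unless the corresponding weight is $O(\rho)$. Turning this into a uniform bound on $\sum_{i\ne k}a_{\{i\}}^2$ — for instance by conditioning on one coordinate at a time (each restriction is again an affine function that is $\sqrt{O(\rho)}$-close to $\{-1,1\}$ on one fewer variable, suggesting an induction on $n$) or by a direct Kolmogorov--Rogozin-type estimate — is the real content of the non-symmetric half, and is the step I expect to cost the most effort.
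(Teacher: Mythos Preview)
This theorem is not proved in the present paper at all: it is quoted verbatim from \cite{JOW} (Theorems~5.3 and~5.8 there) and invoked only as a black box in the proof of Theorem~\ref{thm2}. There is therefore no proof here to compare your proposal against.

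On the substance of your sketch: note first that it addresses only the second assertion (the $8\sqrt\rho$ bound in the biased case). The sharp symmetric estimate $\sum_{|T|\le1,\,T\ne B}a_T^2\le C\rho^4\ln(2/\rho)$ is of a completely different order of magnitude and is \emph{not} a consequence of the $O(\rho)$-type bounds your level-two projection produces; that part of Theorem~\ref{thm1} needs its own argument, which you have not supplied. Second, you yourself correctly isolate the genuine gap in the non-symmetric half: in the regime $63\rho<\norma{L}{}^2\ll1$ your inequality $a_{\{k\}}^2\sum_{i\ne k}a_{\{i\}}^2=O(\rho)$ is useless because $a_{\{k\}}^2$ may itself be $O(\alpha)\ll1$, and the heuristics you offer (induction via one-variable restrictions, or a Kolmogorov--Rogozin anti-concentration bound) are plausible but not carried out. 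As it stands the proposal is an outline with the central step missing, not a proof.
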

This theorem is sharp, up to the universal constant $C$. In the proof the inequality \eqref{hipsym} has been used. However, in the non-symmetric case one can ask for a better bound involving bias parameter $\alpha$. In this note we use inequality \eqref{hipbiased} to prove such an extension of the FKN Theorem. Namely, we have
\begin{thm}\label{thm2}
Let $f=\sum_T a_T w_T$ be the Walsh-Fourier expansion of a function $f:\cube \to \{-1,1 \}$ and let $\rho = \Big(\sum_{|T| >1}a_T^2 \Big)^{1/2} $. Then there exists $k \in [n]$ and a universal constant $c_0>0$ such that for $\rho\ln(e/\rho)<c_0\alpha$ we have
\[
	\norma{f-(a_\emptyset+a_{\{k \}}w_{\{ k\}})}{}\leq 2\rho.
\]	
\end{thm}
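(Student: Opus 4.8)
The plan is to reduce the claim to a statement about the level-$\le 1$ spectrum and then exploit $f^{2}\equiv 1$ through the algebra of the biased cube. Since the $w_T$ form an orthonormal basis,
\[
  \norma{f-(a_\emptyset+a_{\{k\}}w_{\{k\}})}{}^{2}=\sum_{T\ne\emptyset,\,\{k\}}a_T^{2}=\bigl(L-a_{\{k\}}^{2}\bigr)+\rho^{2},\qquad L:=\sum_{|T|=1}a_T^{2},
\]
so it suffices to exhibit $k$ with $L-a_{\{k\}}^{2}\le 3\rho^{2}$, and one takes $k$ with $a_{\{k\}}^{2}=\max_i a_{\{i\}}^{2}$. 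The key algebraic fact is that $w_i^{2}=x_i^{2}=1+\tau x_i$ with $\tau:=\gamma^{-1}-\gamma=\frac{\beta-\alpha}{\sqrt{\alpha\beta}}>0$ and $\tau^{2}\le\frac1{\alpha\beta}\le\frac2\alpha$. Writing $\partial_i f$ for the discrete derivative in the $i$-th variable (so $\widehat{\partial_i f}(S)=a_{S\cup\{i\}}$ for $i\notin S$) and $\bar f_i$ for the average of $f$ over $x_i$, the product rule $\partial_i(f^{2})=2\bar f_i\partial_i f$ and $f^{2}\equiv 1$ give $\bar f_i\partial_i f\equiv 0$; taking means and using $f=\E_{x_i}f+x_i\partial_i f$, $\bar f_i=\E_{x_i}f+\tfrac\tau2\partial_i f$, one gets $\skal{f}{\partial_i f}=-\tfrac\tau2\norma{\partial_i f}{2}^{2}$ for every $i$. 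With $f=a_\emptyset+g+h$, $g=\sum_{|T|=1}a_Tw_T$, $h=\sum_{|T|\ge2}a_Tw_T$ (so $\norma g{2}^{2}=L$, $\norma h{2}=\rho$) and $\partial_i f=a_{\{i\}}+\partial_i h$, this becomes
\[
  a_{\{i\}}\bigl(2a_\emptyset+\tau a_{\{i\}}\bigr)=-\tau R_i-2\skal{g}{\partial_i h}-2\skal{h}{\partial_i h},\qquad R_i:=\norma{\partial_i h}{2}^{2}=\!\!\sum_{\substack{T\ni i,\ |T|\ge2}}\!\!a_T^{2},
\]
where $\skal{g}{\partial_i h}=\sum_{j\ne i}a_{\{j\}}a_{\{i,j\}}$ sees only level-$2$ coefficients, $|\skal{h}{\partial_i h}|\le\rho\sqrt{R_i}$, and $R_i\le\min\{\rho^{2},4\alpha\beta\}$ (the latter from $R_i\le\norma{\partial_i f}{2}^{2}=4\alpha\beta\,\mu_n\{f^{i,+}\ne f^{i,-}\}$, using $\gamma+\gamma^{-1}=(\alpha\beta)^{-1/2}$).

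The right-hand side above is small, so each $a_{\{i\}}$ is forced close to a root of $t\mapsto t(2a_\emptyset+\tau t)$, i.e. close to $0$ or to $-2a_\emptyset/\tau$. To turn this dichotomy into the desired estimate I still need two things: (a) a bound on how high the Fourier mass of a Boolean function can sit, and (b) a way to rule out that many level-$1$ coefficients are of moderate size (equivalently, that $f$ is close to a spread-out affine function), which must use Booleanity beyond the single level-$1$ constraint. For (a) I would use the biased hypercontractive inequality \eqref{hipbiased} with an exponent $q=q(\rho)\in(1,2)$ and a level cut-off $D\asymp\ln(e/\rho)$, chosen so that $c_q^{-D}=O(1)$ while the tail $\sum_{|T|>D}c_q^{2|T|}a_T^{2}$ is absorbed into $\norma f{q}^{2}-a_\emptyset^{2}=L+\rho^{2}$; this produces a bound of the form $\sum_{|T|\ge2}|T|a_T^{2}\le C\rho^{2}\ln(e/\rho)$ (hence $\sum_i R_i\le C\rho^{2}\ln(e/\rho)$ and $\sum_i\sum_{T\ni i,|T|\ge3}a_T^{2}\le C\rho^{2}\ln(e/\rho)$), and it is here that the factor $\ln(e/\rho)$, and with it the hypothesis, appears.

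For the conclusion, replace $f$ by $-f$ so that $a_\emptyset\ge 0$. One first shows $a_\emptyset$ is bounded below by a universal constant: otherwise $L=1-a_\emptyset^{2}-\rho^{2}$ is close to $1$, so $f$ is $L_2$-close to the degree-$\le1$ function $a_\emptyset+g$ with $\norma g{2}\approx1$, which no $\{-1,1\}$-valued function can be — the identity $\norma{g^{2}}{2}^{2}=2L^{2}+(\tau^{2}-1)\sum_i a_{\{i\}}^{4}$ (a consequence of $x_i^{2}=1+\tau x_i$), combined with $\norma g{4}\le\norma{f-a_\emptyset}{4}+\norma h{4}$, $\norma{f-a_\emptyset}{4}^{4}=(1-a_\emptyset^{2})(1+3a_\emptyset^{2})$, and a hypercontractive bound on $\norma h{4}$, is then contradicted. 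With $a_\emptyset$ bounded below, the dichotomy shows there is at most one index $k$ with $|a_{\{i\}}|$ comparable to $a_\emptyset/\tau$ (two such again contradict Booleanity via an $L_4$ estimate), and for $i\ne k$ one has $|2a_\emptyset+\tau a_{\{i\}}|\gtrsim a_\emptyset$, hence $a_{\{i\}}^{2}\lesssim a_\emptyset^{-2}\xi_i^{2}$ with $\xi_i=-\tau R_i-2\skal{g}{\partial_i h}-2\skal{h}{\partial_i h}$. Summing over $i\ne k$, and using $\tau^{2}\sum_i R_i^{2}\le\tau^{2}(\max_i R_i)\sum_i R_i\le 2\alpha^{-1}\rho^{2}\cdot C\rho^{2}\ln(e/\rho)=O(\rho^{3})$ (this is exactly where $\rho\ln(e/\rho)<c_0\alpha$ enters), $\sum_i\skal{h}{\partial_i h}^{2}\le\rho^{2}\sum_i\sum_{T\ni i,|T|\ge3}a_T^{2}=O(\rho^{4}\ln(e/\rho))$, and $\sum_{i\ne k}\skal{g}{\partial_i h}^{2}\le 2L\sum_{|T|=2}a_T^{2}+(\text{lower order})$, one obtains $L-a_{\{k\}}^{2}=\sum_{i\ne k}a_{\{i\}}^{2}\le 3\rho^{2}$ after taking $c_0$ small enough.

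The step I expect to be the main obstacle is the last paragraph: making ``$a_\emptyset$ bounded below'' and ``at most one large level-$1$ coefficient'' quantitative and uniform in $\alpha$ (the regimes $\alpha$ small, where $\tau$ is large, and $\alpha$ near $\tfrac12$, where one is close to the symmetric FKN theorem, behave differently), and — most delicately — controlling $\sum_{i\ne k}\skal{g}{\partial_i h}^{2}$ tightly enough against the target $3\rho^{2}$, since the crude bound $2L\rho^{2}$ is only just compatible with it and only when $a_\emptyset$ is suitably bounded away from $0$. The hypercontractive tail bound of the third paragraph (and hence the role of the logarithm in the hypothesis) is the other technically demanding ingredient.
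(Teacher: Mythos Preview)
Your approach is genuinely different from the paper's and, as you yourself suspect, has real gaps that are not merely cosmetic.

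\medskip
\textbf{Concrete gaps.}
First, the claim that (after a sign flip) $a_\emptyset$ is bounded below by a \emph{universal} constant is false. For the dictator $f=\sgn(x_1)$ one has $a_\emptyset=\alpha-\beta$ and $a_{\{1\}}=2\sqrt{\alpha\beta}$, so as $\alpha\to\tfrac12$ one gets $a_\emptyset\to 0$ while $\rho=0$. Your dichotomy ``$a_{\{i\}}$ close to $0$ or to $-2a_\emptyset/\tau$'' degenerates exactly in this regime, and nothing in the sketch replaces it. Second, the hypercontractive step~(a) as written does not yield $\sum_{|T|\ge 2}|T|a_T^2\le C\rho^2\ln(e/\rho)$: applying \eqref{hipbiased} to $f$ only gives $\sum_T c_q^{2|T|}a_T^2\le 1$, hence $\sum_{|T|\ge 2}c_q^{2|T|}a_T^2\le (1-c_q^2)L+\rho^2$, and there is no obvious way to extract the $|T|$--weighting from this (the level cut-off you describe bounds the truncated sum by something already $\le\rho^2$, not the weighted one). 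Third, the final summation step is, as you admit, only ``just compatible'' with the target $3\rho^2$ and depends on the failed lower bound for $a_\emptyset$.

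\medskip
\textbf{What the paper does instead.}
The paper avoids all of this by a bootstrap. It invokes Theorem~\ref{thm1} as a black box to \emph{choose} $k$ and to get the crude a priori bound $d:=\norma{f-(a_\emptyset+a_{\{k\}}x_k)}{}\le 8\sqrt{\rho}$. Then it applies hypercontractivity not to $f$ but to the $\{-2,0,2\}$--valued function $\tilde h=f-\sgn(a_\emptyset+a_{\{k\}}x_k)$, which satisfies $\mb P(\tilde h\ne 0)\le d^2$ and whose Fourier coefficients agree with those of $f$ at level~$1$ for $T\ne\{k\}$. Since $\|\tilde h\|_q^2=4\,\mb P(\tilde h\ne 0)^{2/q}\le 4d^{4/q}$, inequality \eqref{hipbiased} gives $\sum_{i\ne k}a_{\{i\}}^2\le 4d^{4/q}/c_q(\alpha,\beta)^2$; optimising $q=q(d,\alpha)$ yields $\sum_{i\ne k}a_{\{i\}}^2\le \tfrac{C}{\alpha}d^4\ln(1/d)$. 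Because $d^2=\rho^2+\sum_{i\ne k}a_{\{i\}}^2$, this becomes $\rho^2\ge d^2-\tfrac{C}{\alpha}d^4\ln(1/d)$, and the hypothesis $\rho\ln(e/\rho)<c_0\alpha$ (together with $d\le 8\sqrt\rho$) forces the error term $\le\tfrac34 d^2$, hence $d\le 2\rho$.

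\medskip
The decisive idea you are missing is to apply hypercontractivity to a function of \emph{small support} rather than to $f$; that is what turns the $L_q$ norm into a power of $d$ and produces the self-improving inequality. Trying to rebuild the structural conclusion (which $k$, how large $a_\emptyset$) from scratch is unnecessary once Theorem~\ref{thm1} is available.
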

Our proof of Theorem \ref{thm2}, which is given in the Section \ref{sec1}, is a straightforward application of the ideas used in the proof of Theorem 5.3 in \cite{JOW}.

In the Section \ref{sec2} we consider the case $\gamma=1$ and we deal with the problem concerning $[-1,1]$-valued functions defined on the cube $\kostka$ with uniform product probability measure. A function $f:\kostka \to \mb{R}$ is called \emph{affine} if $f(x)=a_0+\sum_{i=1}^n a_i x_i$, where $a_0,a_1,\ldots,a_n \in \mb{R}$ and $x=(x_1,\ldots,x_n)$. We will denote the set of all affine functions by $\aff$. Moreover, let $\affb \subset \aff$ stands for the set of all affine $[-1,1]$-valued functions. Note that $f \in \affb$ if and only if $\sum_{i=0}^n |a_i|\leq 1$. The function $f(x)=x_i$ will be denoted by $r_i$, $i=1,\ldots,n$. Let us also notice that if $f$ is $[-1,1]$-valued then $|a_T| = |\mb{E}w_T f| \leq \mb{E}|w_T f| \leq 1$.

In \cite{JOW} the authors gave the following example. Take $g:\kostka \to \mb{R}$ given by $g(x)=s^{-1}n^{-1/2} \sum_{i=1}^n x_i$. Note that $g \in \aff$. Define $\phi(x)=-\1_{(-\infty,-1)}(x)+x\1_{[-1,1]}(x)+\1_{(1,\infty)}(x)$ and take $f=\phi \circ g$. Clearly, $f$ is $[-1,1]$-valued but may not be affine. The authors proved that $\lim_{n \to \infty} \dist_{L^2}(f,\aff) = O(e^{-s^2/4})$ and $\lim_{n \to \infty} \dist_{L^2}(f,\affb) = \Theta(s^{-1})$.

Here we prove that this is the worst case. Namely, we have the following theorem.

\begin{thm}\label{thm3}
Let us take $f:\kostka \to [-1,1]$ and define $\rho = \dist_{L^2}(f,\aff)$. Then $\dist_{L_2}(f,\affb) \leq \frac{18}{\sqrt{\ln(1/\rho)}}$.
\end{thm}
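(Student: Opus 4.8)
The plan is to start from a near-optimal affine approximant $\ell = b_0 + \sum_i b_i r_i \in \aff$ with $\norma{f-\ell}{} \le \rho$ (or $\rho$ plus an arbitrarily small slack), and to produce from it a $[-1,1]$-valued affine function by a truncation/rescaling argument whose error is controlled only by the tail behaviour of $\ell$ outside $[-1,1]$. Write $S = \sum_{i\ge 1}|b_i| = \norma{\ell-b_0}{1}$-type quantity and note that by Hölder and orthonormality of the $r_i$, the $L^2$ norm $\big(\sum_{i\ge1} b_i^2\big)^{1/2} \le S$; the candidate in $\affb$ will be a suitably damped version $\lambda \ell$ (and possibly with $b_0$ pulled toward $0$) chosen so that $|b_0\lambda| + \lambda S \le 1$. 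The whole game is to show the damping factor $\lambda$ can be taken close to $1$ — more precisely $1-\lambda = O(1/\sqrt{\ln(1/\rho)})$ — which forces $S$ to be not much bigger than $1$.

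The key step is therefore a \emph{reverse} estimate: because $f$ is $[-1,1]$-valued and $\ell$ is $L^2$-close to $f$, the linear part of $\ell$ cannot have $\ell^1$-mass much exceeding $1$. Here is where the example with $\phi\circ g$ is the extremal shape, and the mechanism should be a small-ball / anticoncentration bound for sums of independent signs: if $S = \sum_{i\ge1}|b_i|$ is large, then $\sum_{i\ge1} b_i r_i$ (after centering) is spread out, so $|\ell|$ exceeds $1$ on a set that is too big (in measure times squared-excess) for $\norma{f-\ell}{}\le\rho$ with $f$ bounded by $1$. Quantitatively, one wants: if $\mu_n(|\ell|>1+t)\ge p$ then $\rho^2 \ge p t^2$, which bounds the "overflow tail" of $\ell$. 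To convert a bound on $S$ into this, I would split into the case where a single coefficient $|b_k|$ dominates (then $\ell$ is essentially a function of one variable, $\ell$ is bimodal near $\pm|b_k|$, and boundedness of $f$ forces $|b_k|\lesssim 1$, after which truncating at $\pm1$ costs $O(\rho)$) and the case where the $b_i$ are "spread" (no coefficient is more than, say, half of the $\ell^2$-energy), in which case a Berry–Esseen / Paley–Zygmund-type bound shows $\sum b_i r_i$ behaves like a Gaussian of standard deviation $\sigma = (\sum b_i^2)^{1/2}$, and then the Gaussian tail computation — exactly as in the $\phi\circ g$ example — gives $\norma{f-\lambda\ell}{} \lesssim \sigma e^{-c/\sigma^2} + (\text{truncation error})$, which one optimizes to the $1/\sqrt{\ln(1/\rho)}$ rate.

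Concretely I would run the argument as follows. First, fix $\ell$ with $\norma{f-\ell}{}^2 \le \rho^2 + \delta$ for tiny $\delta$, WLOG $b_0 \ge 0$. Second, dispose of the degenerate regimes: if $\rho$ is bounded below by an absolute constant the claimed bound is trivial (choose the constant function $b_0$ truncated to $[-1,1]$, or $0$), so assume $\rho$ small; and if some $|b_k|^2 \ge \tfrac12 \sum_{i\ge1}b_i^2$ reduce to the one-variable case and handle it by a direct truncation, getting error $O(\rho)$. Third, in the genuinely spread case, set $\sigma^2 = \sum_{i\ge1}b_i^2$ and use anticoncentration to show $b_0$ and $\sigma$ are both $\le 1 + O(\text{exponentially small in }1/\sigma^2)$, and then define the $\affb$ function by scaling $\ell$ by $\lambda = (|b_0| + \|(\text{linear part})\|_1)^{-1}\wedge 1$ — except that $\ell^1$ and $\ell^2$ norms of the linear part can differ, so instead I would use the cleaner construction $h = \phi\circ(\ell)$... no: $\phi\circ\ell$ need not be affine. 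The honest construction, matching the example, is to take $h(x) = \E[\,\mathrm{trunc}_{[-1,1]}(\ell)\,]$-type conditional linearization — i.e. replace $\ell$ by its $L^2$-projection onto $\aff$ after truncation — but simpler is to note $\mathrm{trunc}_{[-1,1]}(\ell) \in [-1,1]$ and its projection $P$ onto $\aff$ satisfies $\norma{f - P}{} \le \norma{f-\mathrm{trunc}_{[-1,1]}(\ell)}{} \le \norma{f-\ell}{} + \norma{\ell - \mathrm{trunc}_{[-1,1]}(\ell)}{}$ (projection onto a closed convex set is a contraction, and the truncation is such a projection pointwise), and then $P \in \aff$ with $\norma{P}{\infty}$... which is again not obviously $\le 1$. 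So the real work, and the main obstacle, is precisely in producing a member of $\affb$ rather than merely of $\aff$ with a good bound: I expect to resolve it by the damped-rescaling $\lambda\ell$ with $\lambda$ controlled via the overflow-tail estimate $\rho^2 \gtrsim \mu_n(|\ell|>1+t)\,t^2$, which after the Gaussian-tail analysis yields $1-\lambda = O(1/\sqrt{\ln(1/\rho)})$ and hence $\norma{f - \lambda\ell}{} \le \norma{f-\ell}{} + (1-\lambda)\norma{\ell}{} \le \rho + O(1/\sqrt{\ln(1/\rho)})$, which is $\le 18/\sqrt{\ln(1/\rho)}$ for $\rho$ small; the bounded regime is trivial as noted. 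Tracking the absolute constants through the Berry–Esseen step to land at exactly $18$ is the only remaining bookkeeping.
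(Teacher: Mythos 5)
There is a genuine gap, and it sits exactly at the point you identify as ``the real work'': producing a member of $\affb$. Your proposed resolution --- damping the near-optimal affine approximant $\ell=b_0+\sum_i b_i r_i$ by a factor $\lambda$ chosen so that $\lambda(|b_0|+\sum_i|b_i|)\leq 1$, and claiming $1-\lambda=O(1/\sqrt{\ln(1/\rho)})$ because the $\ell^1$-mass of the coefficients ``cannot much exceed $1$'' --- is false, and the paper's own motivating example already refutes it. For $f=\phi\circ g$ with $g=s^{-1}n^{-1/2}\sum_i r_i$, the best affine approximant is essentially $g$ itself, whose coefficient $\ell^1$-mass is $s^{-1}\sqrt{n}\to\infty$ while $\rho=O(e^{-s^2/4})$; no anticoncentration argument can bound the $\ell^1$-mass, because boundedness of $f$ only constrains the distribution of $\ell$, i.e.\ (roughly) its $L^2$-size, not $\sum_i|b_i|$. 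The weaker salvage ``the rescaling error is only $(1-\lambda)\norma{\ell}{}$'' also fails: take $\ell=r_1+\sigma n^{-1/2}\sum_{i=2}^{n}r_i$ and $f=\phi(\ell)$ with $\sigma$ tiny and $n$ huge; then $\rho\lesssim\sigma$ is arbitrarily small, but your $\lambda\approx(1+\sigma\sqrt{n})^{-1}$ is near $0$, so $\norma{f-\lambda\ell}{}\approx 1$, far above $18/\sqrt{\ln(1/\rho)}$, whereas the true nearest $\affb$ element ($r_1$) is at distance $\approx\sigma$. So uniform rescaling is structurally the wrong operation: the large coefficients must be kept at full strength and only the small-coefficient tail removed.

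This is what the paper does and what your outline is missing. After reducing (via the pointwise inequality $|f-S|\geq|S-\phi(S)|$ applied to the Fourier projection $S$ of $f$ onto $\aff$, and an extra-coordinate trick to kill $b_0$) to showing that $\E(|S|-1)_+^2\leq\rho^2$ controls $\dist_{L_2}(S,\affb)$, one orders $a_1\geq a_2\geq\cdots\geq 0$ and sets $\tau=\max\{t:\sum_{i\leq t}a_i\leq 1\}$, then truncates: the candidate in $\affb$ is the greedy prefix $\sum_{i\leq\tau}a_ir_i$ (topped up in the $\tau+1$ coordinate), not a rescaling. The quantitative engine is the Hitczenko--Kwapie\'n inequalities: the moment lower bound $\norma{S}{t}\geq\frac14\sqrt{t}\,(\sum_{i>t}a_i^2)^{1/2}$, combined with Paley--Zygmund/Khinchine and the Chebyshev bound $\p{|S|\geq1+\e}\leq\rho^2/\e^2$, shows the coefficient $\ell^2$-tail beyond the first $t\sim\ln(1/\rho)$ coordinates is $O(1/\sqrt{\ln(1/\rho)})$ (this handles $\tau\gtrsim\ln(1/\rho)$); and the small-ball bound $\p{|S_2|\geq\norma{S_2}{}}>\frac1{10}$, conditioned on the first $\tau+1$ signs being positive, handles $\tau\lesssim\ln(1/\rho)$, forcing $\sum_{i\leq\tau+1}a_i+\norma{S_2}{}-1\leq\sqrt{20}\,\rho^{1-\ln 2/\ln 3}$. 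Your Berry--Esseen dichotomy (one dominant coefficient versus spread coefficients) does not capture the critical mixed regime --- a few $O(1)$ coefficients plus a long thin tail --- which is precisely where the rescaling idea breaks and where the $\tau$-based truncation plus these two lemmas are needed.
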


In this paper we use the $\{-1,1 \}$-valued function $\sgn(x)=-\mb{I}_{(-\infty,0)}(x)+\mb{I}_{[0,\infty)}(x)$. By $C$ we denote a universal constant that may vary from one line to another.

\section{Proof of Theorem \ref{thm2}}\label{sec1}

Here we give a proof of Theorem \ref{thm2}. 

\begin{proof}
Let $k$ be given by Theorem \ref{thm1}. Let $h=f-(a_{\emptyset}+a_{\{k\}}x_k)$ and $\tilde{h}=f-\sgn(a_{\emptyset}+a_{\{k\}}x_k)$. Moreover, let $d=\norma{h}{}$. Note that for every $u \in \mb{R}$ and $\e \in \{-1,1\}$ we have $|u-\sgn(u)| \leq |u-\e|$. Therefore,
\[
	|\e-\sgn(u)| \leq |\e-u|+|u-\sgn(u)| \leq 2|u-\e|.
\]
It follows that $|\tilde{h}|\leq 2|h|$. Thus, using the fact that $\tilde{h}$ is $\{-2,0,2 \}$-valued, we have
\[
	\mb{P}(\tilde{h} \ne 0)= \frac14 \| \tilde{h} \|^2 \leq \norma{h}{}^2 = d^2.
\]
Let us consider the expansion $\tilde{h}=\sum_{T}\tilde{a}_T w_T$. Clearly, $\tilde{a}_T=a_T$ for $T \ne \emptyset,\{ k\}$. Using \eqref{hipbiased} we obtain
\begin{align*}
	4d^{4/q} & \geq 4 \mb{P}(\tilde{h}\ne 0)^{2/q} = \| \tilde{h} \|_q^2 = \norma{\sum_T \tilde{a}_T w_T}{q}^2 \geq \norma{\sum_T c_q(\alpha,\beta)^{|T|} \tilde{a}_T w_T}{2}^2 \\
	& = \sum_{T} c_q(\alpha,\beta)^{2|T|} \tilde{a}_T^2 \geq c_q(\alpha,\beta)^{2} \sum_{|T|\leq 1}  \tilde{a}_T^2,
\end{align*}
where $q \in [1,2]$. We arrive at
\[
	\sum_{|T|\leq 1, \ T \ne \emptyset,\{k  \}}  \tilde{a}_T^2 \leq \sum_{|T|\leq 1}  \tilde{a}_T^2 \leq \frac{4d^{4/q}}{c_q(\alpha,\beta)^{2} }  = 4d^{4/q} \alpha \beta \cdot \frac{\alpha^{-\frac{2}{q}} - \beta^{-\frac{2}{q}}  }{\beta^{2-\frac{2}{q}}-\alpha^{2-\frac{2}{q}}}  .
\]
Let $p=\frac{\beta}{\alpha}\in(1,\infty)$ and $x=p^{2/q}\in[p,p^2]$. Then we have
\[
	4d^{4/q} \alpha \beta \frac{\alpha^{-\frac{2}{q}} - \beta^{-\frac{2}{q}}  }{\beta^{2-\frac{2}{q}}-\alpha^{2-\frac{2}{q}}} = 4d^{\frac{2\ln x}{\ln p}} \alpha \beta \frac{1-\frac{1}{x}}{\frac{\beta^2}{x}-\alpha^2 } = 4d^{\frac{2\ln x}{\ln p}} p \frac{x-1}{p^2-x} \leq 4d^{\frac{2\ln x}{\ln p}} p \frac{p^2-1}{p^2-x}.
\] 
Without loss of generality, taking sufficiently small $c_0>0$, we can assume that $\rho \leq \frac{1}{64 \cdot 9}$. From Theorem \ref{thm1} we obtain $d \leq 8\sqrt{\rho} \leq \frac13 \leq \frac{1}{e}$. Therefore, $\frac{1}{\ln(1/d)} \in [0,1]$ and we can take $x=p^{2-\frac{1}{\ln(1/d)}}$. Then 
\[
	4d^{\frac{2\ln x}{\ln p}} p \frac{p^2-1}{p^2-x} = 4e^2 d^4 \frac{p^2-1}{p} \cdot \frac{1}{1-e^{-\frac{\ln p}{\ln(1/d)}}}.
\]
For $c_0 \in (0,1)$ the condition $\rho\ln(e/\rho) \leq c_0 \alpha$ clearly implies $\rho \leq \alpha$. Since $d \leq 8 \sqrt{\rho} \leq \min\{ \frac13, 8\sqrt{\alpha}\}$, then one can easily check that there exists a constant $c_1$ such that $0 \leq \frac{\ln p}{\ln(1/d)} \leq c_1$. There exists a constant $c_2$ such that for all $s \in [0,c_1]$ we have $\frac{1}{1-e^{-s}}\leq \frac{c_2}{s}$. Thus,
\[
	 4e^2 d^4 \frac{p^2-1}{p} \cdot \frac{1}{1-e^{-\frac{\ln p}{\ln(1/d)}}} \leq 8c_0e^2 d^4 \frac{p-1}{\ln p} \ln(1/d) \leq C \frac{d^4 \ln(1/d)}{\alpha \ln(1/ \alpha)} .
\] 
We arrive at
\[
	\sum_{|T|\leq 1, \ T \ne \emptyset,\{k  \}}  \tilde{a}_T^2 \leq C \frac{d^4 \ln(1/d)}{\alpha \ln(1/ \alpha)}  \leq \frac{C}{\alpha} \cdot d^4 \ln(1/d).
\]
We have 
\begin{align*}
	d^2 & = \norma{f-(a_{\emptyset}+a_{\{k\}}x_k)}{}^2 = \norma{f}{}^2 + \norma{a_{\emptyset}+a_{\{k\}}x_k}{}^2 - 2 \skal{f}{a_{\emptyset}+a_{\{k\}}x_k} \\
	& = 1 + a_{\emptyset}^2+a_{\{k\}}^2  - 2(a_{\emptyset}^2+a_{\{k\}}^2   ) = 1-a_{\emptyset}^2+a_{\{k\}}^2   .  
\end{align*}
Thus, $a_{\emptyset}^2+  a_{\{k\}}^2  = 1-d^2$ and we can write

\[
	\sum_{|T|\leq 1} a_T^2   \leq a_\emptyset^2 +a_{\{k \}}^2  + \frac{C}{\alpha} \cdot d^4 \ln(1/d) = 1-d^2+ \frac{C}{\alpha} \cdot d^4 \ln(1/d).
\]
It follows that
\[
	\rho^2 = \sum_{|T|>1} a_T^2  = 1 - \sum_{|T|\leq 1} a_T^2  \geq d^2 -\frac{C}{\alpha} \cdot d^4 \ln(1/d).
\]
Since from Theorem \ref{thm1} we know that $\rho \leq d \leq 8\sqrt{\rho}$, we obtain
\[
	\frac{C}{\alpha} \cdot d^2 \ln(1/d) \leq \frac{64C}{\alpha} \rho \ln(1/\rho) \leq 64Cc_0 \leq \frac{3}{4},
\]
assuming that $\rho \ln(e/\rho)\leq c_0\alpha$ and $c_0>0$ is sufficiently small. 
Therefore,
\[
	\rho^2 \geq  d^2 -\frac{C}{\alpha} \cdot d^4 \ln(1/d) \geq d^2- \frac{3}{4}d^2 = \frac14 d^2.
\]
It follows that $d \leq 2\rho$.
\end{proof}

\begin{rem*}
The condition $\rho \ln(e/ \rho) \leq c_0 \alpha$ cannot be replaced by $\rho \leq \alpha$. Indeed, if we take $f:\{-\gamma,\gamma^{-1} \}^2 \to \{-1,1 \}$ given by 
\[
	f(x_1,x_2) = 2(\beta-\sqrt{\beta \alpha}x_1)(\beta-\sqrt{\beta \alpha}x_2)-1,
\] 
see the remark after the proof of Theorem 5.8 in \cite{JOW}, then we obtain $\rho=2\alpha \beta$ and $d=2\beta^{3/2}\alpha^{1/2}$. Thus $d = \sqrt{2\rho} \beta \geq \sqrt{\rho/2}$.  
\end{rem*}

\section{Proof of Theorem \ref{thm3}}\label{sec2}

We need the following lemma due to P. Hitczenko and S. Kwapień.

\begin{lem}
(\cite{HK}, Theorem 1 and \cite{O}, Theorem 1) Let $a_1 \geq a_2 \geq \ldots \geq a_n \geq 0$ and let us take $S:\kostka \to \mb{R}$ given by $S=\sum_{i=1}^n a_i r_i$. Then for $t \geq 1$ we have 
\begin{equation}\label{eq1}
\p{|S| \geq \norma{S}{}} > \frac{1}{10}
\end{equation} 
and
\begin{equation}\label{eq2}
\norma{S}{t} \geq \frac14  \sqrt{t} \Big(\sum_{i>t} a_i^2 \Big)^{1/2}.
\end{equation}
\end{lem}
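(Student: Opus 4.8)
The plan is to treat \eqref{eq1} and \eqref{eq2} separately; both are classical facts about the Rademacher sum $S=\sum a_i r_i$, and throughout I write $\sigma=\norma{S}{}=\big(\sum_i a_i^2\big)^{1/2}$ and use two standard inputs for a degree-one function on the cube: the fourth moment identity $\E S^4=3\sigma^4-2\sum_i a_i^4\le 3\sigma^4$, and the symmetry of $S$ (and of its subsums).

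For \eqref{eq1} I would first extract the sub-threshold anti-concentration from the second moment method. By the Paley--Zygmund inequality applied to $S^2$ together with $\E S^4\le 3\sigma^4$ one gets $\p{S^2\ge \theta\sigma^2}\ge (1-\theta)^2/3$ for every $\theta\in[0,1)$, i.e.\ $\p{|S|\ge\lambda\sigma}\ge(1-\lambda^2)^2/3$ for $\lambda<1$. This misses exactly the endpoint $\lambda=1$ that \eqref{eq1} asks for, and no argument using finitely many moments can close the gap: a two-point law sitting just below $\sigma$ with a tiny faraway atom matches all low moments yet has $\p{|S|\ge\sigma}$ arbitrarily small, so the distribution of $S$ must be used. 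I would peel off the largest coefficient, writing $S=a_1 r_1+R$ with $R=\sum_{i\ge 2}a_i r_i$ symmetric; symmetry then yields the exact identity $\p{|S|\ge\sigma}=\p{|R+a_1|\ge\sigma}\ge\p{R\ge\sigma-a_1}=\tfrac12\p{|R|\ge\sigma-a_1}$. If one coordinate dominates, $a_1\ge\sigma/\sqrt2$, then $(\sigma-a_1)/\norma{R}{}=\sqrt{(\sigma-a_1)/(\sigma+a_1)}\le\sqrt2-1$, and the sub-threshold bound applied to $R$ gives $\tfrac12\p{|R|\ge(\sqrt2-1)\norma{R}{}}>\tfrac1{10}$. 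If instead $a_1$ is a small multiple of $\sigma$, then $\max_i|a_i|/\sigma$ and $\sum_i|a_i|^3/\sigma^3\le a_1/\sigma$ are small, so a Berry--Esseen comparison gives $\p{|S|\ge\sigma}\ge 2\Phi(-1)-C\,a_1/\sigma>\tfrac1{10}$.

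For \eqref{eq2} the main reduction is conditioning: splitting $[n]$ into the head $H=\{i\le t\}$ and the tail $\mathcal T=\{i>t\}$, conditional Jensen gives $\norma{S}{t}\ge\norma{\sum_{i\in\mathcal T}a_i r_i}{t}$, and likewise with $H$, since averaging out one block leaves the other. Writing $\tau=\big(\sum_{i>t}a_i^2\big)^{1/2}$ and $N=|\mathcal T|$, I would distinguish two regimes. When the tail is carried by few large terms (say $N\le 16\lfloor t\rfloor/t$), the ordering $a_1\ge\cdots\ge a_n$ forces each of the $\lfloor t\rfloor$ head coefficients to be at least $a_{\lfloor t\rfloor+1}\ge\tau/\sqrt N$, so for $t\ge 2$ the crude bound $\norma{S}{t}\ge\norma{S}{2}\ge\big(\sum_{i\le t}a_i^2\big)^{1/2}\ge\sqrt{\lfloor t\rfloor}\,a_{\lfloor t\rfloor+1}\ge\tfrac14\sqrt t\,\tau$ already suffices (the constant $1/4$ is exactly what this chain produces; for $1\le t<2$ one replaces $\norma{S}{2}$ by $\norma{S}{1}\ge\tfrac1{\sqrt2}\norma{S}{2}$ via Khinchine). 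When instead the tail is spread over many small coefficients, each bounded by $a_{\lfloor t\rfloor+1}$, a lower Khinchine / central-limit estimate gives $\norma{\sum_{i\in\mathcal T}a_i r_i}{t}\ge c\sqrt t\,\tau$, and the conditioning step transfers this to $S$.

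The hard part is the same in both statements: passing from the easy sub-threshold and extreme regimes to the full assertion with an absolute constant. For \eqref{eq1} it is the intermediate range of $a_1/\sigma$, where neither peeling a single coordinate nor the Gaussian approximation is decisive and one must peel several top coordinates (or invoke a sharper anti-concentration estimate); this is the substance of Oleszkiewicz's theorem. For \eqref{eq2} it is producing the lower Khinchine constant uniformly in real $t\ge1$ and stitching it to the short-tail case, where for large $t$ one must instead exploit that the head itself carries about $t$ comparable coordinates, so that its own $t$-th moment is large; this uniform control is precisely the Hitczenko--Kwapień estimate, which I would ultimately cite rather than reprove in full.
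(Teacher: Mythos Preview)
The paper does not give a proof of this lemma at all: it is stated with explicit attribution to \cite{HK} and \cite{O} and then used as a black box in the proof of Theorem~\ref{thm3}. There is therefore no ``paper's own proof'' to compare against; in effect the paper does exactly what your last sentence says you would do, namely cite the two references.

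Judged on its own, your sketch is a sensible outline but, as you yourself acknowledge, it has real gaps. For \eqref{eq1} your two regimes do not meet: peeling off $a_1r_1$ together with Paley--Zygmund on the remainder yields $>\tfrac{1}{10}$ only for roughly $a_1/\sigma\gtrsim 0.63$, while Berry--Esseen with the best known constant yields $>\tfrac{1}{10}$ only for roughly $a_1/\sigma\lesssim 0.23$; the interval in between is uncovered, and closing it is precisely what Oleszkiewicz's argument does (by a different route than a dichotomy on $a_1/\sigma$). For \eqref{eq2} your ``spread tail'' case rests on an inequality of the form $\norma{T}{t}\ge c\sqrt{t}\,\norma{T}{2}$ for a Rademacher sum $T$ with an absolute $c$; note this is \emph{false} in general (take $T=r_1$), so it can hold only under a smallness assumption on the coefficients, and isolating the correct hypothesis and proving the bound uniformly in real $t\ge 1$ is essentially the content of the Hitczenko--Kwapie\'n estimate you are trying to derive. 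In short, both halves of your plan ultimately defer to the cited theorems, which is also what the paper does.
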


We give a proof of Theorem \ref{thm3}.

\begin{proof}[Proof of Theorem \ref{thm3} ]
\emph{Step 1.} If $f=\sum_T a_T w_T$ then $\dist_{L_2}(f,\aff)=\norma{f-S }{}$, where $S=\sum_{|T|\leq 1} a_T w_T$. For every $u \in [-1,1]$ we have $|x-u| \geq |x-\phi(x)|$ for all $x \in \mb{R}$. Taking $x=S$ and $u=f$ we obtain $\mb{E}(|S|-1)_+^2 = \norma{S-\phi(S)}{}^2 \leq \norma{S-f}{}^2 \leq \rho^2$. For all $g \in \affb$ we have
\[
	\norma{g-f}{} \leq \norma{g-S}{}+\norma{S-f}{} \leq \norma{g-S}{} + \rho.
\]
Therefore, 
\begin{equation}\label{eq3}
	\dist_{L_2}(f,\affb) \leq \dist_{L_2}(S,\affb)+\rho.
\end{equation} 
It suffices to prove that $\mb{E}(|S|-1)_+^2 \leq \rho^2$ implies an appropriate bound on $\dist_{L_2}(S,\affb)$,  whenever $S=a_0+\sum_{i=1}^n a_ir_i$, where $a_0,a_1,\ldots,a_n \in \mb{R}$.

\emph{Step 2.} Suppose that for all $n \geq 1$ we can prove that $\mb{E}(|S|-1)_+^2 \leq \rho^2$ implies $\dist_{L_2}(S,\affb) \leq M$ for some $M>0$, assuming that $a_0=0$. Then we can deal with the case $a_0 \ne 0$ as follows. Let us take $\tilde{S}:\{-1,1\} \times \kostka \to \mb{R}$ given by $\tilde{S}=a_0x_0+\sum_{i=1}^n a_i x_i$. Clearly, $\mb{E}(|\tilde{S}|-1)_+^2=\mb{E}(|S|-1)_+^2 \leq \rho^2 $. We can find a $[-1,1]$-valued function $\tilde{S}_0=b_0x_0+\sum_{i=1}^n b_i x_i$ such that $\norma{\tilde{S}-\tilde{S_0}}{} \leq M$. Take $S_0=b_0+\sum_{i=1}^n b_i x_i$. Now it suffices to observe that the function $S_0$ is $[-1,1]$-valued and to notice that $\norma{\tilde{S}-\tilde{S_0}}{}=\norma{S-S_0}{}$.     

\emph{Step 3.} Take $S=\sum_{i=1}^n a_i r_i$. Without loss of generality we can assume that $1 \geq a_1 \geq a_2 \geq \ldots \geq a_n \geq 0$. Let $\tau=\max\{ t \geq 1: \sum_{i=1}^t a_i \leq 1 \}$. Clearly, $\tau \geq 1$. If $f$ is already in $\affb$ then there is nothing to prove. Therefore we can assume that $\tau<n$. We can also assume that $\rho \leq 1/3$, since otherwise we have 
\[
	\dist_{L_2}(f,\affb) \leq \dist_{L_2}(f,0) = \norma{f}{} \leq 1 \leq \frac{18}{\sqrt{\ln(1/ \rho)}}.
\]
Let $A=\{ |S| \geq \frac12 \norma{S}{t} \}$. For $t \geq 1$ we have
\[
	\mb{E}|S|^t  = \mb{E}|S|^t \1_{A} + \mb{E}|S|^t \1_{A^c} 
	 \leq \sqrt{\mb{E}|S|^{2t}} \sqrt{\mb{P}(A)} + \frac{1}{2^t} \mb{E}|S|^t.
\]
Since by the Khinchine inequality we have $(\mb{E}|S|^{2t})^{1/2t} \leq \sqrt{\frac{2t-1}{t-1}}(\mb{E}|S|^{t})^{1/t}$, we obtain
\[
	\mb{P}\left(|S| \geq \frac12 \norma{S}{t}\right) \geq \left(1-\frac{1}{2^t} \right)^2\frac{(\mb{E}|S|^t)^2}{\mb{E}|S|^{2t}} \geq \frac14 \frac{(\mb{E}|S|^t)^2}{\mb{E}|S|^{2t}} \geq \frac14 \left( \frac{t-1}{2t-1} \right)^t. 
\]
By the Chebyshev inequality we obtain
\begin{equation}\label{eq4}
	\p{|S| \geq 1+\e} \leq \frac{\mb{E}(|S|-1)_+^2}{\e^2} \leq \frac{\rho^2}{\e^2},
\end{equation}
for all $\e>0$.
Let $t \geq 1$ and assume that $\norma{S}{t}>2$. Take $\e =\frac12 \norma{S}{t}-1>0$. We obtain
\[
	\frac14 \left( \frac{t-1}{2t-1} \right)^t \leq \p{|S| \geq \frac12 \norma{S}{t}} \leq \frac{\rho^2}{\left( \frac12 \norma{S}{t} -1   \right)^2}.
\]
It follows that 
\[
	\norma{S}{t} \leq 2+4 \rho \left( \frac{2t-1}{t-1} \right)^{t/2} 
\] 
which is also true in the case $\norma{S}{t} \leq 2$. From inequality \eqref{eq2} we obtain  
\begin{equation}\label{st}
\frac14 \sqrt{t} \Big(\sum_{i>t} a_i^2 \Big)^{1/2}	\leq  \norma{S}{t} \leq 2+4 \rho \left( \frac{2t-1}{t-1} \right)^{t/2}.
\end{equation}
We consider the case $\tau \geq \frac{2}{\ln 3} \ln(1/\rho) \geq 1$. 
Let us now take $t=\frac{2}{\ln 3} \ln(1/\rho)\geq 2>1$ and define $S_1=\sum_{i \leq \frac{2}{\ln 3} \ln(1/\rho)} a_i r_i$. Notice that we have $\sum_{i \leq \frac{2}{\ln 3} \ln(1/\rho)} a_i \leq \sum_{i \leq \tau} a_i \leq 1$ . Thus, $S_1 \in \affb$. Moreover, since $t\geq 2$, we have 
$
	 \rho \left( \frac{2t-1}{t-1} \right)^{t/2} \leq \rho 3^{t/2} = 1
$ 
and therefore by \eqref{st} we have
\[
	\dist_{L_2}(S,\affb)  \leq \norma{S-S_1}{} = \left(\sum_{i> \frac{2}{\ln 3} \ln(1/ \rho)} a_i^2 \right)^{1/2} \leq \frac{24 }{\sqrt{ \frac{2}{\ln 3} \ln(1/ \rho)}}.
\]
In this case \eqref{eq3} yields 
\[
	\dist_{L_2}(f,\affb) \leq \frac{24 }{\sqrt{ \frac{2}{\ln 3} \ln(1/ \rho)}}+ \rho \leq \frac{18 }{\sqrt{\ln(1/ \rho)}}.
\]

\emph{Step 4.} We are to deal with the case $\tau< \frac{2}{\ln 3}\ln(1/ \rho)$. Let $S_2=\sum_{i \geq \tau+2} a_i r_i$. From inequality \eqref{eq1} we have 
\[
	\p{|S| \geq \sum_{i \leq \tau+1} a_i + \norma{S_2}{}  }  \geq \frac{1}{2^{\tau+1}} \p{|S_2| \geq \norma{S_2}{}} \geq \frac{1}{2^{\tau+1}} \cdot \frac{1}{10} \geq \frac{1}{20} \rho^{\frac{2\ln 2}{\ln 3}}.
\]
Note that $\sum_{i \leq \tau+1} a_i > 1$. Therefore, from inequality \eqref{eq4} we obtain  
\[
	\p{|S| \geq \sum_{i \leq \tau+1} a_i + \norma{S_2}{} }	 \leq \frac{\rho^2}{ \left(  \sum_{i \leq \tau+1} a_i + \norma{S_2}{} -1  \right)^2  }.
\]
It follows that
\[
	\sum_{i \leq \tau+1} a_i + \norma{S_2}{} -1  \leq  \sqrt{20} \rho^{1-\frac{\ln 2}{\ln 3}}. 
\]
Take $S_1 = \sum_{i=1}^\tau a_i r_i + (1-(a_1+\ldots+a_\tau))r_{\tau+1}$. Clearly, $S_1 \in \affb$. Moreover,
\begin{align*}
	\norma{S-S_1}{} & = \left((1-(a_1+\ldots+a_\tau)-a_{\tau+1})^2 + \norma{S_2}{}^2 \right)^{1/2} \\
	& \leq |a_1+\ldots+a_\tau+a_{\tau+1}-1| + \norma{S_2}{} \leq \sqrt{20} \rho^{1-\frac{\ln 2}{\ln 3}}.
\end{align*}
Therefore, from \eqref{eq3} we have
\[
	\dist_{L_2}(f,\affb) \leq \sqrt{20} \rho^{1-\frac{\ln 2}{\ln 3}} + \rho \leq \frac{18 }{\sqrt{\ln(1/ \rho)}}.
\]
\end{proof}

\begin{rem*}
If we perform our calculation with $\ln(2.03)$ instead of $\ln 3$ we will obtain the theorem with a constant $14,5$ instead of $18$. 
\end{rem*}

\section*{Acknowledgements}

I would like to thank Prof. Krzysztof Oleszkiewicz for his useful comments.

\noindent Piotr Nayar$^\star$, \texttt{nayar@mimuw.edu.pl}

\vspace{2em}

\noindent $^\star$Institute of Mathematics, University of Warsaw, \\
\noindent Banacha 2, \\
\noindent 02-097 Warszawa, \\
Poland.

\end{document}